\def\div{\operatorname {\text{div}}}
\newcommand{\R}{\mathbb{R}}
\newcommand{\N}{\mathbb{N}}
\newtheorem{theorem}{Theorem}[section]
\newtheorem{lemma}[theorem]{Lemma}
\newtheorem{prop}[theorem]{Proposition}
\theoremstyle{definition}
\theoremstyle{remark}
\numberwithin{equation}{section}
\newcommand{\lam}{\lambda}
\begin{document}

\title[Optimization problems with volume constraint]{
Nonstandard growth optimization problems with volume constraint}

\author[ A.  Salort, B. Schvager, A. Silva, ]{Ariel Salort, Belem Schvager and Anal\'ia Silva}

\address[A. Salort]{Instituto de C\'alculo (UBA - CONICET) and Departamento de Matematica, FCEyN, Universidad de Buenos Aires, Pabellon I, Ciudad Universitaria (1428), Buenos Aires, Argentina.}

\email{asalort@dm.uba.ar} \urladdr{http://mate.dm.uba.ar/~asalort/}
\address[A. Silva]{Instituto de Matem\'atica Aplicada San luis (IMASL),
Universidad Nacional de San Luis, CONICET. Ejercito de los Andes
950, D5700HHW , San Luis,
Argentina.}

\email{acsilva@unsl.edu.ar}
\urladdr{https://analiasilva.weebly.com}

\address[B.B. Schvager ]{Instituto de Matem\'atica Aplicada San luis (IMASL),
Universidad Nacional de San Luis, CONICET. Ejercito de los Andes
950, D5700HHW , San Luis,
Argentina.}

\email{bbschvager@unsl.edu.ar}

\subjclass[2010]{ 35J60, 35J66, 35Q93, 46E30}

\keywords{Orlicz-Sobolev spaces, optimization problems}

\begin{abstract}

In this article we study some optimal design problems related to nonstandard growth eigenvalues ruled by the $g-$Laplacian operator. More precisely, given $\Omega\subset \R^n$ and  $\alpha,c>0$ we consider the optimization problem $\inf \{ \lambda_\Omega(\alpha,E)\colon E\subset \Omega, |E|=c \}$, where $\lambda_\Omega(\alpha,E)$ is related to the first eigenvalue to
$$
-\div(g( |\nabla u |)\tfrac{\nabla u}{|\nabla u|}) + g(u)\tfrac{u}{|u|}+ \alpha \chi_E g(u)\tfrac{u}{|u|} \quad \text{ in }\Omega
$$
subject to Dirichlet, Neumann or Steklov boundary conditions.
\\
We analyze existence of an optimal configuration, symmetry properties of them, and the asymptotic behavior as $\alpha$ approaches $+\infty$.
\normalcolor
\end{abstract}

\maketitle

\section{Introduction}
The literature on optimization problems is very wide, from the classical cases of isoperimetrical problems to the most recent applications including elasticity and spectral optimization. Only to mention some references and motivations, we refer the reader to the books of Allaire \cite{All}, Bucur and Buttazzo \cite{BB}, Henrot \cite{He}, Pironneau \cite{Piro} and Sokolowski and Zol\'esio \cite{SZ}, where a huge amount of shape optimization problems is introduced. Optimization problems in the more general form can be stated as follows: given a cost functional $\mathcal{F}$ and a class of admissible domains $\mathcal{E}$, solve the minimization problem
\begin{equation} \label{opti}
\min \left\{ \mathcal{F}(E) \colon E \in \mathcal{E} \right\}.
\end{equation}
In recent years there has been an increasing amount of interest in optimization problems for power-like functionals, see \cite{All, BB,CC,CEP, DPFB, DPFB2,DPFBR,EF, EL,He, M,Piro, SZ} for instance. Moreover, optimization problems describing non-local phenomena have been approached recently \cite{DG,DPFBR, FBRS, PS}. However, optimization problems of the form  \eqref{opti}, where the \emph{state equation} to be solved on $E$ involves  behaviors more general than powers are less common in the literature. Additional drawbacks can arise in these class of problems due to the possible lack of homogeneity of the functional. We cite for instance the articles \cite{DSSSS, SM, MW, ZZZ}.

In this manuscript we  study the existence of an optimal configuration for a minimization problem ruled by the nonlinear degenerate and possibly not homogeneous operator \emph{$g$-Laplacian} defined as $\Delta_g u :=\div( g(|\nabla u|) \frac{\nabla u}{|\nabla u|})$, where $G(t)=\int_0^tg(s)\,ds$ is a Young function fulfilling the following growth condition
\begin{equation} \label{cond} \tag{L}
1<p^- G(t) \leq tg(t) \leq p^+ G(t) <\infty \qquad \text{for all } t\in \R^+,
\end{equation}
for fixed constants $p^+$ and $p^-$ and
\begin{equation} \label{cond2} \tag{C}
 g(t) \text{ is convex   for all } t\geq 0.
\end{equation}

These kind of problems appears naturally when studying general optimal design problems, and they are usually formulated as problems of minimization of the energy, stored in the design under a prescribed loading. Solutions of these problems are unstable to perturbations of the loading, and the optimal design problem is formulated as minimization of the stored energy under the most unfavorable loading. This most dangerous loading is one that maximizes the stored energy over the class of admissible functions. Finally, the problem is reduced to minimization involving the behavior on the boundary. See \cite{CC,SLO} for more  details.


We describe now our problem.  We consider a bounded connected Lipschitz domain $\Omega\subset \R^n$, $\alpha>0$ and  numbers $c\in [0,|\Omega|]$. For any measurable set $E\subset \Omega$,    we consider the quantity  $\lam_\Omega (\alpha,E)$  
\begin{equation} \label{lambda.d} \tag{$\mathcal{D}$}
\lambda_\Omega (\alpha,E)=\inf\left\{ \int_\Omega G(|\nabla u|)\,dx +\alpha\int_E G(|u|)\,dx \colon u\in \mathcal{W}(\Omega)\right\},
\end{equation}
where $\mathcal{W}(\Omega)$ is the set of functions $u\in W^{1,G}_0(\Omega)$ such that $\int_\Omega G(|u|)\,dx =1$. We also consider the quantity
\begin{equation} \label{lambda.n} \tag{$\mathcal{N}$}
\lambda_\Omega (\alpha,E)=\inf\left\{ \int_\Omega G(|\nabla u|) +G(|u|)\,dx+\alpha\int_E G(|u|)\,dx \colon u\in \mathcal{W}(\Omega)\right\},
\end{equation}
where in this case $\mathcal{W}(\Omega)$ is the set of functions $u\in W^{1,G}(\Omega)$ such that $\int_\Omega G(|u|)\,dx =1$.
Our minimization problem can be stated as follows: given $\alpha>0$ and $c\in [0,|\Omega|]$ we look for
\begin{equation} \label{Lam.d}
\Lambda_\Omega(\alpha,c) := \inf\{ \lambda_\Omega (\alpha,E) \colon E\subset \Omega, |E|=c \},
\end{equation}
that is,   to optimize $\lambda_\Omega(\alpha,E)$ with respect to the class of sets $E\subset \Omega$ of fixed volume.
Let $\chi_E$ be its characteristic function on $E$, then \eqref{lambda.d} and \eqref{lambda.n} are related to (see Proposition \ref{propo}) the eigenvalue problem 
\begin{align} \label{eq.d} \tag{$\mathcal{P}_D$}
\begin{cases}
-\Delta_g u +\alpha \chi_E  g(|u|)\frac{u}{|u|} =\lambda g(|u|)\frac{u}{|u|} &\text{ in } \Omega\\
u=0 &\text{ on }\partial \Omega
\end{cases}
\end{align}
and
\begin{align} \label{eq.n} \tag{$\mathcal{P}_N$}
\begin{cases}
-\Delta_g u + g(|u|)\frac{u}{|u|} + \alpha \chi_E  g(|u|)\frac{u}{|u|} =\lambda g(|u|)\frac{u}{|u|} &\text{ in } \Omega\\
g(|\nabla u|) \frac{\nabla u}{|\nabla u|} \cdot \eta =0 &\text{ on } \partial\Omega,
\end{cases}
\end{align}
respectively, where $\eta$ denotes the outer normal.



An \emph{optimal configuration} for the data $(\Omega,\alpha,c)$ is a minimizer $E$ of \eqref{Lam.d}. If $E$ is an optimal configuration and $u$  is a minimizer of $\lambda_\Omega(\alpha,E)$, then $(u,E)$ is an \emph{optimal pair}.\\
In some applications, for certain design purposes, different forms of the cost functional need to be considered. This leads to take into account optimization problems with different boundary conditions. With the same approach we can also deal with the optimization problem related with the Steklov eigenvalues problem, i.e., the minimization problem \eqref{Lam.d} in the case in which $\lam_\Omega(\alpha,E)$ is a quantity related with 
\begin{align} \label{eq.s} \tag{$\mathcal{P}_S$}
\begin{cases}
-\Delta_g u + g(|u|)\frac{u}{|u|} + \alpha \chi_E g(|u|)\frac{u}{|u|} =0 &\text{ in } \Omega\\
g(|\nabla u|) \frac{\nabla u}{|\nabla u|} \cdot \eta = \lam   g(|u|)\frac{u}{|u|} &\text{ on } \partial\Omega,
\end{cases}
\end{align}
that is, it is defined as
\begin{equation} \label{lambda.s} \tag{$\mathcal{S}$}
\lambda_\Omega (\alpha,E)=\inf\left\{ \int_\Omega G(|\nabla u|) +G(|u|)\,dx+\alpha\int_E G(|u|)\,dx \colon u\in \mathcal{W}(\Omega)\right\},
\end{equation}
where  $\mathcal{W}(\Omega)$  is the set of functions $u\in   W^{1,G}(\Omega)$ such that $\int_{\partial\Omega} G(|u|)\,d\mathcal{H}^{n-1} =1$, being $\mathcal{H}^{n-1}$ the $n-1$-dimensional Hausdorff measure.

Analogously, if $E$ is an optimal configuration and $u$   minimize \eqref{Lam.d}, then $(u,E)$ is an \emph{optimal pair}.

Optimization problems as \eqref{Lam.d} have been already studied in the context of eigenvalues of the Laplacian, and that has been our main inspiration for this article. See  \cite{CGK, CGIKO, DPFBR}. We highlight that the lack of homogeneity of the operator carries out several technical problems we have to face along this article.

Our first main result concerns existence of an optimal pair related to the minimization problems \eqref{Lam.d}  and some properties on them.


\begin{theorem} \label{main1}
For any $\alpha>0$ and $c\in [0,|\Omega|]$ there exists an optimal pair $(u,E)$ such that
\begin{itemize}
\item[(a)] $u\in C^{1,\gamma}{(\Omega)}\cap C^\gamma(\overline\Omega)$ for same $\gamma \in (0, 1)$.
\item[(b)] $E$ is a sublevel set of $u$, i.e.,  there is $t\geq 0$ such that  $E=\{u\leq t\}$.
\item[(c)] Every level set $\{u=s\}$, $s\geq 0$ has Lebesgue measure zero, except possibly when $\alpha$ is an eigenvalue (Dirichlet case) or when $1+\alpha$ is an eigenvalue (Neumann case).
\end{itemize}
\end{theorem}

When  in particular $\Omega$ is the unit ball $B_1$,  we obtain existence of a spherically symmetric optimal configuration:

\begin{theorem} \label{main3}
Fix $\alpha>0$ and $c\in (0,|B_1|)$, then there exists an optimal pair   $(u,E)$ such that $u$ and $E$ are spherically symmetric.
\end{theorem}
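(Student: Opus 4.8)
The plan is to reduce the double optimization \eqref{eq2a} to a single minimization over $\mathcal{A}$, and then to exploit a radial rearrangement tailored to the Steklov normalization. First I would interchange the two infima in \eqref{eq2a}, writing $\Lambda(\alpha,c)=\inf_{\phi\in\mathcal B}\min_{v\in\mathcal A}[\,\cdots\,]=\min_{v\in\mathcal A}\inf_{\phi\in\mathcal B}[\,\cdots\,]$, and evaluating the inner infimum by the bathtub principle, so that for each fixed $v$ the optimal $\phi$ concentrates $G(|v|)$ on a sublevel set of measure $c$. This yields
\begin{equation*}
\Lambda(\alpha,c)=\min_{v\in\mathcal A}F(v),\qquad F(v):=\int_{B_1}G(|\nabla v|)+G(|v|)\,dx+\alpha\inf_{|A|=c}\int_A G(|v|)\,dx,
\end{equation*}
where the inner infimum depends only on the distribution function of $|v|$.

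Next, let $(u,\chi_D)$ be an optimal pair provided by Theorem \ref{main1}, so that $u\in C^{1,\gamma}(\overline{B_1})$ is positive and $F(u)=\Lambda(\alpha,c)$. Set $M:=\max_{\overline{B_1}}u$ and $w:=M-u\ge 0$, let $w^\sharp$ be the spherically symmetric decreasing rearrangement of $w$ on $B_1$, and define the radially increasing competitor $u^\star:=M-w^\sharp$. I would then check the three properties that make $u^\star$ a good competitor: (a) since $|\nabla u^\star|=|\nabla w^\sharp|$ and $|\nabla w|=|\nabla u|$, the Pólya--Szegő inequality in the Orlicz setting gives $\int_{B_1}G(|\nabla u^\star|)\le\int_{B_1}G(|\nabla u|)$; (b) $u^\star$ is equimeasurable with $u$, whence $\int_{B_1}G(u^\star)=\int_{B_1}G(u)$ and, the inner penalization in $F$ depending only on the distribution, $\inf_{|A|=c}\int_A G(u^\star)=\inf_{|A|=c}\int_A G(u)$; (c) since $u^\star$ is radially increasing with maximum $M$, its trace on $\partial B_1$ equals the constant $M$, so that $\int_{\partial B_1}G(u^\star)\,d\mathcal H^{n-1}=|\partial B_1|\,G(M)\ge\int_{\partial B_1}G(u)\,d\mathcal H^{n-1}=1$, using $u\le M$ on $\overline{B_1}$.

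Finally, because the boundary integral of $u^\star$ may exceed $1$, I would renormalize: choose $\beta\ge 1$ with $\int_{\partial B_1}G(u^\star/\beta)\,d\mathcal H^{n-1}=1$, which exists since the left-hand side is continuous and decreasing in $\beta$ and at least $1$ at $\beta=1$. Put $\tilde u:=u^\star/\beta\in\mathcal A$. As $G$ is increasing and $\beta\ge1$, every term of $F$ does not increase under dividing by $\beta$, so combining with (a)--(c) we get $F(\tilde u)\le F(u^\star)\le F(u)=\Lambda(\alpha,c)$; since $\tilde u\in\mathcal A$ forces $F(\tilde u)\ge\Lambda(\alpha,c)$, equality holds and $\tilde u$ is a minimizer. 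The associated optimal configuration is then $\chi_{D^\star}$ with $D^\star=\{\tilde u\le t\}$ a central ball of volume $c$, so that both $\tilde u$ and $D^\star$ are spherically symmetric, as claimed.

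I expect the main obstacle to be the correct handling of the Steklov boundary normalization under symmetrization: the ordinary symmetric decreasing rearrangement sends the boundary trace to the \emph{smallest} values and thus increases $\lambda$, so it is essential to use the increasing rearrangement $M-w^\sharp$, which places the maximum on $\partial B_1$ and makes the trace comparison in (c) go the right way. The remaining delicate points are the validity of the Orlicz Pólya--Szegő inequality under \eqref{cond} and the identification of the trace of the monotone radial function $u^\star$, which is clean here thanks to the $C^{1,\gamma}$ regularity from Theorem \ref{main1}.
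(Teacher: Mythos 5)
Your reduction of the double infimum to $\min_{v\in\mathcal A}F(v)$, the bathtub step, the trace computation for the monotone radial competitor, and the renormalization by $\beta\ge 1$ are all fine. The proof breaks at step (a): the P\'olya--Szeg\H{o} inequality you invoke is for the Schwarz rearrangement of functions vanishing on the boundary (or defined on all of $\R^n$), and $w=M-u$ does \emph{not} vanish on $\partial B_1$. For the rearrangement performed \emph{inside} $B_1$ of a function whose level sets touch $\partial B_1$, the Dirichlet-type energy can strictly increase, because the relevant isoperimetric comparison is with the perimeter \emph{relative to} $B_1$, whose minimizers are cap-like sets attached to the boundary, not centered balls. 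A concrete counterexample in $\R^2$ with $G(t)=t^2$: $w(x)=(1-2|x-e_1|)_+$ on $B_1$ has $\int_{B_1}|\nabla w|^2\approx \pi/2$, while its Schwarz rearrangement $w^\sharp$ (supported on a centered disc of the same area, roughly half that of the disc of radius $1/2$) has $|\nabla w^\sharp|=2\sqrt2$ on its support and $\int_{B_1}|\nabla w^\sharp|^2\approx \pi$. So $F(u^\star)\le F(u)$ cannot be asserted, and the whole chain collapses. Your closing remark correctly diagnoses the boundary-trace obstruction, but the fix $u^\star=M-w^\sharp$ only repairs the trace term; the gradient term is where the argument actually fails.

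This is precisely why the paper uses the \emph{spherical (cap) symmetrization} rather than the Schwarz rearrangement: it rearranges $u$ on each concentric sphere $\partial B(0,r)$ separately, so the boundary modular is preserved \emph{exactly} (Proposition \ref{reluu*}(ii), so no renormalization is needed), the correct P\'olya--Szeg\H{o} inequality holds (Proposition \ref{reluu*}(iii), resting on the isoperimetric inequality on spheres rather than the relative one in the ball), and the weighted term decreases against the dual symmetrization of $\chi_D$ (Proposition \ref{reluu*}(iv)). The price is that ``spherically symmetric'' in Theorem \ref{main3} means symmetric in the sense of this cap symmetrization (axially symmetric, monotone in the polar angle), not radially symmetric. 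Your proposal claims the strictly stronger conclusion that $u$ is radial and $D$ is a centered ball; that conclusion is not established by the paper and, as it stands, not by your argument either. If you want to salvage your route, you would need a P\'olya--Szeg\H{o} inequality adapted to non-vanishing boundary values in the ball, which is exactly the point that fails.
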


Once the set $E$ is fixed, it is easy to see that when $\alpha\to +\infty$ the quantity $\lambda_\Omega(\alpha,E)$ converges to   the minimizer of the problem with $E$ as a hole in $\Omega$ (that is, the minimizing function vanishes on $E$), i.e.,
\begin{equation} \label{lam.inf.E}
\lambda_\Omega(\infty,E)  := \lim_{\alpha\to\infty}\lambda_\Omega(\alpha,E)= \inf_{v\in \mathcal{W}(\Omega), v|_E\equiv 0 }  \int_\Omega G(|\nabla v|)\,dx  .
\end{equation}
The natural limit optimization problem in this case is
\begin{equation}\label{Lam.inf}
\Lambda_\Omega(\infty,c):=\inf\{\lambda_\Omega(\infty,E) \colon E\subset \Omega, |E|=c\}.
\end{equation}
A natural question is whether the optimal configuration of \eqref{Lam.d} converges to these of \eqref{Lam.inf} when $\alpha\to\infty$. The following result answers positively to that issue.


\begin{theorem} \label{main2}
For any sequence $\alpha_k\to\infty$ and optimal pairs $(u_k,E_k)$ of \eqref{Lam.d} there exist a subsequence, still denoted $\alpha_k$, and an optimal pair $(u,E)$  of \eqref{Lam.inf} such that
\begin{align*}
\lim_{k\to\infty} \chi_{E_k}=\chi_E &\quad \text{weakly* in }L^\infty(\Omega),\\
\lim_{k\to\infty} u_k=u &\quad \text{strongly in }\mathcal{W}(\Omega).
\end{align*}
Moreover, $u>0$ in $\Omega\setminus E$. 

\end{theorem}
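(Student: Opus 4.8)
The plan is to combine a priori energy bounds, weak compactness, lower semicontinuity and the Harnack/positivity structure to extract a limit pair and upgrade the convergences. First I would record two monotonicity facts: $\alpha\mapsto\lambda(\alpha,c)$ is nondecreasing, and $\lambda(\alpha,c)\le\lambda(\infty,c)$ for every $\alpha$, the latter because any $v\in\mathcal{A}$ vanishing on a hole $E$ makes the penalization term in \eqref{eq.relax.intro} vanish and is therefore a competitor in \eqref{lam.inf.E} as well. Writing $u_k=u(\alpha_k,\chi_{D_k})$ and using that $(D_k,u_k)$ is an optimal pair, this gives the uniform bound $\int_\Omega G(|\nabla u_k|)+G(|u_k|)\,dx+\alpha_k\int_{D_k}G(|u_k|)\,dx=\lambda(\alpha_k,c)\le\lambda(\infty,c)=:M$. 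Hence $u_k$ is bounded in $W^{1,G}(\Omega)$ and, crucially, $\int_{D_k}G(|u_k|)\,dx\le M/\alpha_k\to0$. By reflexivity of $W^{1,G}(\Omega)$ (guaranteed by \eqref{cond}) I extract $u_k\rightharpoonup u$; by compactness of the trace and of the Rellich embeddings, $u_k\to u$ strongly in $L^G(\partial\Omega)$ and in $L^G(\Omega)$, so $u\in\mathcal{A}$ and $G(|u_k|)\to G(|u|)$ in $L^1(\Omega)$. Finally, since $\mathcal{B}$ is weak$^*$ compact, $\chi_{D_k}\cde\phi$ for some $\phi\in\mathcal{B}$.

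Next I would identify the limit obstacle and its optimality. Passing to the limit in $\int_\Omega\chi_{D_k}G(|u_k|)\,dx$, the $L^1$ convergence of $G(|u_k|)$ together with $\chi_{D_k}\cde\phi$ gives $\int_\Omega\phi\,G(|u|)\,dx=\lim_k\int_{D_k}G(|u_k|)\,dx=0$; since $\phi\ge0$ this forces $\phi$ to be supported in $\{u=0\}$, and from $\int_\Omega\phi\,dx=c$ with $0\le\phi\le1$ we deduce $|\{u=0\}|\ge c$. Choosing any measurable $D\subseteq\{u=0\}$ with $|D|=c$ makes $u$ admissible in \eqref{lam.inf.E} for this hole. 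Combining weak lower semicontinuity of $v\mapsto\int_\Omega G(|\nabla v|)+G(|v|)\,dx$ with $\int_\Omega G(|\nabla u_k|)+G(|u_k|)\,dx\le\lambda(\alpha_k,c)\le\lambda(\infty,c)$ produces the chain $\int_\Omega G(|\nabla u|)+G(|u|)\,dx\le\lambda(\infty,c)\le\lambda(\infty,D)\le\int_\Omega G(|\nabla u|)+G(|u|)\,dx$, forcing all inequalities to be equalities. Thus $\lim_k\lambda(\alpha_k,c)=\lambda(\infty,c)$, the pair $(D,u)$ is optimal for \eqref{eq2.inf}, and $u$ realizes $\lambda(\infty,D)$.

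It then remains to upgrade the convergences. The chain above also yields modular convergence $\int_\Omega G(|\nabla u_k|)+G(|u_k|)\,dx\to\int_\Omega G(|\nabla u|)+G(|u|)\,dx$; combined with $u_k\rightharpoonup u$ and the uniform convexity of $W^{1,G}(\Omega)$ available under \eqref{cond}, this gives $u_k\to u$ strongly in $W^{1,G}(\Omega)$, and in particular $u_k\to u$ a.e.\ along a further subsequence. To identify $\phi$ as a characteristic function I would exploit positivity: since $u$ minimizes \eqref{lam.inf.E} for the hole $D$, it solves $-\Delta_g u+g(|u|)\frac{u}{|u|}=0$ in $\Omega\setminus D$ together with the Steklov condition on $\partial\Omega$, and being nonnegative (as the strong limit of the nonnegative eigenfunctions $u_k$), Harnack's inequality in the Orlicz setting yields $u>0$ in $\Omega\setminus D$. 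If $|\{u=0\}|>c$, then $\{u=0\}\setminus D$ would have positive measure and lie in $\Omega\setminus D$, where $u>0$, a contradiction; hence $|\{u=0\}|=c$. Since $\phi$ is supported in $\{u=0\}$ with $0\le\phi\le1$ and $\int_\Omega\phi\,dx=c=|\{u=0\}|$, necessarily $\phi=\chi_{\{u=0\}}=:\chi_D$, which delivers $\chi_{D_k}\cde\chi_D$ as claimed, and the asserted positivity $u>0$ in $\Omega\setminus D$ is already in hand.

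I expect the main obstacle to be the positivity step, namely the Harnack inequality for the possibly non-homogeneous limit operator on $\Omega\setminus D$: the entire identification $\phi=\chi_D$ hinges on ruling out that the weak$^*$ limit mixes values in $(0,1)$, and this is precisely what the equality $|\{u=0\}|=c$ (obtained from positivity) prevents. A secondary technical point is the passage from modular convergence of the energies to strong convergence in $W^{1,G}(\Omega)$: the possible lack of homogeneity of $G$ precludes a direct Luxemburg-norm argument, so I would rely on the uniform convexity ensured by \eqref{cond}, consistent with the regularity $u\in C^{1,\gamma}(\overline\Omega)$ recorded in Theorem \ref{main1}.
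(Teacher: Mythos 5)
Your overall architecture matches the paper's: a uniform bound on the energies (you bound by $\lambda(\infty,c)$, the paper by an ad hoc competitor $\mathcal{K}$; both work), compactness and lower semicontinuity, the key vanishing $\int_\Omega\phi\,G(|u|)\,dx=0$ deduced from $\alpha_k\int_{D_k}G(|u_k|)\,dx\le M$, and the squeeze $\Phi_{1,G,\Omega}(u)\le\lambda(\infty,c)\le\lambda(\infty,D)\le\Phi_{1,G,\Omega}(u)$ identifying the limit as an optimal pair. Your explicit upgrade from modular convergence of the energies to strong $W^{1,G}(\Omega)$ convergence via uniform convexity is a point where you are more complete than the paper, which leaves that step implicit. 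Where you genuinely diverge is the identification $\phi=\chi_D$: the paper routes this through Lemma \ref{M} (strict monotonicity of $c\mapsto\lambda(\infty,c)$, whose Step 2 shows that any extremal of $\lambda(\infty,c)$ vanishes on a set of measure exactly $c$), whereas you argue directly from positivity of $u$ on $\Omega\setminus D$.

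That positivity step is the one genuine gap. For a fixed measurable hole $D$, the claim that Harnack ``yields $u>0$ in $\Omega\setminus D$'' does not hold as stated: $\Omega\setminus D$ need not be open or connected, and the strong maximum principle only gives the dichotomy $u\equiv 0$ or $u>0$ on each connected component of the open complement. On a component not meeting $\partial\Omega$, the minimizer of \eqref{lam.inf.E} will in fact vanish identically (setting $u=0$ there strictly decreases $\Phi_{1,G,\Omega}$ without affecting the boundary constraint), so minimality for the \emph{fixed} hole $D$ alone cannot rule out $|\{u=0\}|>c$ --- which is exactly what your contradiction needs. What saves the argument, and what the paper isolates in Step 2 of Lemma \ref{M}, is that $u$ is extremal for the \emph{optimized} problem $\lambda(\infty,c)$: if $u$ vanished on a closed set $E$ with $|E|>c$, one removes a small ball $B$ centered on $\partial E\cap\partial\Omega_1$ (with $\Omega_1$ the component of $\Omega\setminus E$ whose boundary contains $\partial\Omega$) keeping $|E\setminus B|\ge c$; then $u$ remains extremal for the enlarged free region, the equation holds in $(\Omega\setminus E)\cup B$, and the maximum principle applied to the component containing $\partial\Omega$ (where $u\not\equiv 0$ because $u\in\mathcal{A}$) forces $u>0$ on $B$, contradicting $|E\cap B|>0$. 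Since you have already established that $(D,u)$ is optimal for \eqref{eq2.inf}, this ingredient is available to you, but it must actually be invoked: componentwise Harnack for the fixed hole does not suffice.
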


The paper is organized as follows. In Section \ref{preliminar} we introduce the notation and basic facts on Orlicz-Sobolev spaces used along the manuscript.  Section \ref{sec.autoval} is devoted to study the link between the minimization problems and the eigenvalue problems. In Section  \ref{sec.main1}  we prove our  main results stated in Theorems \ref{main1}, \ref{main3}. Finally, in Section \ref{sec.main2} we prove Theorem \ref{main2}.

\section{Preliminaries}\label{preliminar}
In this section we introduce some notation and basic results on Orlicz-Sobolev spaces that we will use in this paper.
\subsection{Young functions}
An application $G\colon\R_+\to \R_+$ is said to be a  \emph{Young function} if it admits the integral formulation $G(t)=\int_0^t g(\tau)\,d\tau$, where the right continuous function $g$ defined on $[0,\infty)$ has the following properties:
\begin{align*}
&g(0)=0, \quad g(t)>0 \text{ for } t>0 \label{g0} \tag{$g_1$}, \\
&g \text{ is non-decreasing on } (0,\infty) \label{g2} \tag{$g_2$}, \\
&\lim_{t\to\infty}g(t)=\infty  \label{g3} \tag{$g_3$} .
\end{align*}
From these properties it is easy to see that a Young function $G$ is continuous, non negative, strictly increasing and convex on $[0,\infty)$.

We will assume the following growth condition on Young functions: there exist fixed constants $p^\pm$ such that
$$
1\leq p^- \leq \frac{tg(t)}{G(t)} \leq p^+ <\infty, \qquad \text{ for all } t>0.
$$
For technical reasons, we will also assume that $g(t)$ is convex   for all $t\geq 0$.

The following  properties on Young functions are well-known. See for instance \cite{FJK} for the proof of these results.

\begin{lemma} \label{lema.prop}
Let $G$ be a Young function satisfying \eqref{cond} and $a,b\geq 0$. Then
\begin{align*}
  &\min\{ a^{p^-}, a^{p^+}\} G(b) \leq G(ab)\leq   \max\{a^{p^-},a^{p^+}\} G(b),\tag{$L_1$}\label{L1}\\
  &G(a+b)\leq \mathbf{C} (G(a)+G(b)) \quad \text{with } \mathbf{C}:=  2^{p^+},\tag{$L_2$}\label{L2}\\
	&G \text{ is Lipschitz continuous}. \tag{$L_3$}\label{L_3}
 \end{align*}
\end{lemma}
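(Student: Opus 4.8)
The engine behind all three estimates is a reformulation of the growth condition \eqref{cond} as a two-sided control on the logarithmic derivative of $G$. Since $g=G'$ and $G>0$ on $(0,\infty)$, dividing \eqref{cond} by $t>0$ shows that it is equivalent to
\begin{equation*}
\frac{p^-}{t}\le \frac{g(t)}{G(t)} = \big(\log G\big)'(t) \le \frac{p^+}{t}, \qquad t>0.
\end{equation*}
My plan is to integrate this inequality to obtain \eqref{L1}, then to deduce \eqref{L2} from \eqref{L1} by a monotonicity argument, and finally to read off \eqref{L_3} from the local boundedness of $g$.

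For \eqref{L1} I would fix $b>0$ and treat the two cases $a\ge 1$ and $0<a<1$ separately (the cases $a=0$ or $b=0$ being trivial since $G(0)=0$). When $a\ge 1$, integrating the displayed bound over $[b,ab]$ gives $p^-\log a \le \log\frac{G(ab)}{G(b)} \le p^+\log a$, whence $a^{p^-}\le G(ab)/G(b)\le a^{p^+}$. When $0<a<1$ one integrates instead over $[ab,b]$; the factor $\log(1/a)>0$ reverses the roles of the exponents and yields $a^{p^+}\le G(ab)/G(b)\le a^{p^-}$. In either case the lower bound equals $\min\{a^{p^-},a^{p^+}\}$ and the upper bound equals $\max\{a^{p^-},a^{p^+}\}$, so the two cases merge into the single estimate \eqref{L1}.

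To obtain \eqref{L2}, set $M=\max\{a,b\}$ and use $a+b\le 2M$ together with the monotonicity of $G$ to write $G(a+b)\le G(2M)$. Applying \eqref{L1} with the multiplier $2\ge 1$ gives $G(2M)\le 2^{p^+}G(M)$, and since $G(M)=\max\{G(a),G(b)\}\le G(a)+G(b)$ we conclude $G(a+b)\le 2^{p^+}(G(a)+G(b))$, i.e.\ \eqref{L2} with $\mathbf{C}=2^{p^+}$. Finally, for \eqref{L_3} I read the claim as local Lipschitz continuity (a global Lipschitz bound is impossible because $g(t)\to\infty$ by \eqref{g3}): since $g$ is non-decreasing, for $0\le s\le t\le M$ one has $|G(t)-G(s)|=\int_s^t g(\tau)\,d\tau\le g(M)\,(t-s)$, so $G$ is Lipschitz on every $[0,M]$ with constant $g(M)$.

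The only genuinely delicate point is the bookkeeping in \eqref{L1} when $a<1$: both the direction of the inequality and the identification of the exponents with the $\min/\max$ hinge on the sign of $\log a$, and this is where I would be most careful. Everything else reduces to the monotonicity of $G$ and $g$ and to the single integral estimate.
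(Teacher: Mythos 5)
Your argument is correct and is the standard one: the paper itself gives no proof of this lemma, simply citing Kufner--John--Fu\v{c}\'{\i}k, and the integration of $g/G=(\log G)'$ between $b$ and $ab$ is exactly how \eqref{L1} is established there, with \eqref{L2} and \eqref{L_3} following as you describe. Your case bookkeeping for $0<a<1$ is right, and your reading of \eqref{L_3} as \emph{local} Lipschitz continuity is the correct one, since \eqref{g3} indeed rules out a global Lipschitz bound.
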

Condition \eqref{cond} is known as the \emph{$\Delta_2$ condition} or \emph{doubling condition} and, as it is showed in \cite[Theorem 3.4.4]{FJK}, it is equivalent to the right hand side inequality in \eqref{cond}.

The \emph{complementary} Young function of a Young function $G$ is defined as
$$
\displaystyle \tilde{G}(t): = \sup_{s\geq0}\{st-G(s)\}.
$$
It is easy to see that the left hand side inequality in \eqref{cond} is equivalent to assume that $\tilde G$ satisfies the $\Delta_2$ condition.\\
The following useful Lemma can be found in \cite[Lemma 2.9]{BoS}.
\begin{lemma}\label{lemita}
Let $G$ be a Young function and let $\tilde G$ be its complementary function. Then
$$
\tilde G(g(t)) \leq C G(t).
$$
\end{lemma}
Finally, we state the following version of the so-called Bathtub principle in our settings. See for instance \cite[Theorem 1.14]{LL}.
\begin{prop} \label{bathtube}
Then, the minimization problem 
$$
 \inf\left\{ \int_\Omega \eta G(|u|)\,dx\colon  0\leq \eta \leq 1, \int_\Omega \eta \,dx =c \right\}
$$
has a solution  $\eta= \chi_{E}(x)$ where $E$ is any set with $|E|=c$ and
$$
\{u<t\} \subset E \subset \{u\leq t\}, \quad t=\sup\{s \colon |\{u<s\}|<c\}.
$$
\end{prop}
\subsection{Orlicz-Sobolev spaces}
Given a   Young function  $G$ and a bounded set $\Omega$ we consider the spaces
\begin{align*}
	L^G(\Omega) & :=\{ u : \R\to \R \text{ measurable such that }\int_\Omega G(|u|)\,dx<\infty\},
	\\L^G(\partial\Omega) & :=\{ u : \R\to \R \text{ measurable such that } \int_{\partial\Omega} G(|u|)\, d\mathcal{H}^{n-1}<\infty\},
	\\W^{1,G}(\Omega)& :=\{ u \in L^{G}(\Omega) \text{ such that }
\int_\Omega G(|\nabla u|)\,dx <\infty\}.
\end{align*}
These spaces are endowed with the so called  \textit{Luxemburg norm} defined as follows
\begin{align*}
\|u\|_{L^G(\Omega)}&=\inf \left\{ \lambda>0 : \int_\Omega G \left(
\frac{u}{\lambda}\right)\,dx \leq 1 \right\},\\
\|u\|_{L^G(\partial\Omega)}&=\inf \left\{ \lambda>0 : \int_{\partial\Omega} G \left(
\frac{u}{\lambda}\right)\,d\mathcal{H}^{n-1}\leq 1 \right\},\\
\|u\|_{W^{1,G}(\Omega)} &= \|u\|_{L^G(\Omega)} + \|\nabla
u\|_{L^G(\Omega)},
\end{align*}
and  are reflexive and separable Banach spaces if and only if   $G$ y $\tilde{G}$ satisfies the    $\Delta_2$ condition.

We define  $W^{1,G}_0(\Omega)$ as the subset of functions in $W^{1,G}(\Omega)$ such that $u=0$ in $\partial\Omega$.
We conclude this subsection by recalling that under our assumptions, boundedness of the modular implies boundedness of the norm (see for instance \cite[Lemma 2.1.12]{DHHR}).
\begin{lemma} \label{acotado}
Let $G$ be a Young function satisfying \eqref{cond}. Then, if $\int_{\partial\Omega}G(|u|)\,d\mathcal{H}^{n-1}$ (resp. $\int_\Omega G(|u|)\,dx$) is bounded, then $\|u\|_{L^G(\partial \Omega)}$ (resp. $\|u\|_{L^G(\Omega)}$) is bounded.
\end{lemma}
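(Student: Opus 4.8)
The plan is to exploit the scaling inequality \eqref{L1} from Lemma \ref{lema.prop}, which plays here the role that exact homogeneity plays in the power case $G(t)=t^p$. I will argue the interior statement; the boundary one is identical after replacing $\Phi_{G,\Omega}$ by $\Phi_{G,\partial\Omega}$ and the Lebesgue measure by $\mathcal{H}^{n-1}$. Set $M:=\Phi_{G,\Omega}(u)$ and assume $M<\infty$; the aim is to produce an admissible $\lambda$ in the infimum defining the Luxemburg norm, together with an explicit bound on it in terms of $M$.

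First I would compute the effect on the modular of dividing $u$ by a scalar $\lambda\ge 1$. Applying \eqref{L1} pointwise with $a=1/\lambda$ and $b=|u(x)|$ and then integrating over $\Omega$, I obtain
$$
\Phi_{G,\Omega}\left(\frac{u}{\lambda}\right)=\int_\Omega G\left(\frac{|u|}{\lambda}\right)\,dx \le \max\{\lambda^{-p^-},\lambda^{-p^+}\}\,\Phi_{G,\Omega}(u).
$$
The only point genuinely worth checking is which exponent governs the estimate: since $\lambda\ge 1$ forces $1/\lambda\le 1$, and $p^-\le p^+$, one has $\lambda^{-p^+}\le \lambda^{-p^-}$, so the maximum equals $\lambda^{-p^-}$. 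Hence for every $\lambda\ge 1$,
$$
\Phi_{G,\Omega}\left(\frac{u}{\lambda}\right)\le \frac{M}{\lambda^{p^-}}.
$$

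Finally I would choose $\lambda$ so that the right-hand side is at most $1$. Taking $\lambda:=\max\{1,M^{1/p^-}\}$ gives $\Phi_{G,\Omega}(u/\lambda)\le 1$, so by the very definition of the Luxemburg norm,
$$
\|u\|_{L^G(\Omega)}\le \max\{1,M^{1/p^-}\},
$$
which is the desired quantitative bound. No real obstacle arises: once \eqref{L1} is available the estimate is immediate, and the subtlety is purely the bookkeeping of the exponents $p^\pm$ in the regime $\lambda\ge 1$. I would remark that the left-hand inequality in \eqref{cond} (equivalently, $\widetilde{G}\in\Delta_2$) is what keeps $p^->0$ and hence makes the choice $M^{1/p^-}$ meaningful.
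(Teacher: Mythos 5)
Your proof is correct. Note that the paper does not actually prove Lemma \ref{acotado}: it simply cites \cite[Lemma 2.1.12]{DHHR}, so your argument supplies the self-contained quantitative version of that reference. The computation is sound: for $\lambda\geq 1$ the inequality \eqref{L1} with $a=1/\lambda$ gives $\Phi_{G,\Omega}(u/\lambda)\leq \lambda^{-p^-}M$, and the choice $\lambda=\max\{1,M^{1/p^-}\}$ makes this $\leq 1$, so the Luxemburg norm is at most $\max\{1,M^{1/p^-}\}$ by definition. This is exactly the right replacement for homogeneity in the power case, and the explicit bound is in fact more informative than the qualitative statement of the lemma. Two cosmetic remarks: the identification of $\max\{\lambda^{-p^-},\lambda^{-p^+}\}=\lambda^{-p^-}$ for $\lambda\geq 1$ is the one place an exponent slip could occur, and you handle it correctly; and your closing comment attributing $p^->0$ to the left-hand inequality of \eqref{cond} is inessential (the paper assumes $p^-\geq 1$ outright, and the equivalence with $\widetilde{G}\in\Delta_2$ concerns $p^->1$), but it does not affect the argument.
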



We also recall the following Poincar\'e's inequality: given a bounded set $\Omega\subset \R^n$, for any $u\in W^{1,G}(\Omega)$ such that $|\{x\in \Omega \colon u(x)=0\}|=\kappa >0$ it holds that 
$$
\int_\Omega G(|u|)\,dx \leq C \int_\Omega G(|\nabla u|)\,dx 
$$
for some positive constant $C$ depending only on $n$, $\kappa$ and $p^\pm$. The same conclusion holds when $u\in W^{1,G}_0(\Omega)$. With this inequality together with  Lemma \ref{acotado}, we can ensure that the limit problem \eqref{lam.inf.E} is well possed.

\subsection{Some embedding results} \label{sec.emb}
In order to guarantee compact embeddings  the following conditions will be assumed
\begin{equation} \label{cond1}
\int_{0}^{1}\frac{G^{-1}(s)}{s^{1+\frac{1}{n}}}ds< \infty  \quad \text{and}\quad  \int_{1}^{\infty }\frac{G^{-1}(s)}{s^{1+\frac{1}{n}}}ds=\infty.
\end{equation}
For any Young function satisfying \eqref{cond1},  the \emph{Sobolev critical function} is defined as
$$
G_{*}^{-1}(t)=\int_{0}^{t}\frac{G^{-1}(s)}{s^{\frac{n+1}{n}}}ds.
$$

Combining  \cite[Theorems 7.4.6 and 7.4.6]{FJK} with \cite[Example 6.3]{DSSSS} the following embedding  holds.

\begin{prop}\label{compact}
	Let $G$ be a Young function satisfying \eqref{cond} and \eqref{cond1}. Let $\Omega\subset \R^n$ be a $C^{0,1}$ bounded open subset. Then   the embeddings
	$$
	W^{1,G}(\Omega)\hookrightarrow L^{G}(\Omega), \qquad
	W^{1,G}(\Omega)\hookrightarrow L^{G}(\partial\Omega)
	$$
	are compact.
\end{prop}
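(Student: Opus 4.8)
The plan is to obtain both compact embeddings in two stages: first a \emph{continuous} Orlicz--Sobolev, respectively trace, inequality into the optimal target space governed by the Sobolev critical function $G_*$, and then an \emph{upgrade} from continuity to compactness that exploits the divergence condition in \eqref{cond1}. The two conditions in \eqref{cond1} play distinct roles, and keeping them separate organizes the argument.

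First I would establish the continuous embeddings $W^{1,G}(\Omega)\hookrightarrow L^{G_*}(\Omega)$ and $W^{1,G}(\Omega)\hookrightarrow L^{G_*}(\partial\Omega)$. The finiteness of $\int_0^1 G^{-1}(s)\,s^{-(n+1)/n}\,ds$ is exactly what makes $G_*^{-1}(t)=\int_0^t G^{-1}(s)\,s^{-(n+1)/n}\,ds$ well defined for every $t$, so that $G_*$ is a genuine (finite-valued) Young function; the growth bound \eqref{cond} then places both $G$ and $G_*$ in the doubling class, which lets me run the standard rearrangement/symmetrization estimate (equivalently, the one-dimensional reduction along lines) to produce the Sobolev inequality, with the trace version obtained by localizing near $\partial\Omega$ through the $C^{0,1}$ charts and integrating along the normal direction. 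This step guarantees that $W^{1,G}$-bounded sequences are bounded in $L^{G_*}(\Omega)$ and in $L^{G_*}(\partial\Omega)$.

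Second I would convert this boundedness into compactness. The key observation is that the divergence $\int_1^\infty G^{-1}(s)\,s^{-(n+1)/n}\,ds=\infty$ places us in the genuinely subcritical regime and forces $G$ to increase essentially more slowly than $G_*$ near infinity, i.e. $\lim_{t\to\infty} G(\lambda t)/G_*(t)=0$ for every $\lambda>0$. For the interior embedding I would take a sequence bounded in $W^{1,G}(\Omega)$; the control on $|\nabla u|$ yields equi-control of translations, so a Fr\'echet--Kolmogorov (Riesz) argument produces a subsequence converging in $L^1(\Omega)$, hence almost everywhere. The essential-slower-growth relation $G\ll G_*$ makes $\{G(|u_k|)\}$ uniformly integrable with respect to the modular $\Phi_{G,\Omega}$, so a Vitali-type convergence theorem yields modular convergence $\Phi_{G,\Omega}(u_k-u)\to 0$; by Lemma \ref{acotado} together with the $\Delta_2$ condition \eqref{cond} this is equivalent to norm convergence in $L^G(\Omega)$. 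For the trace embedding I would argue analogously with the surface measure $\mathcal{H}^{n-1}$ as reference: the continuous trace map into $L^{G_*}(\partial\Omega)$ provides the boundedness, the same $G\ll G_*$ relation upgrades the weak/modular convergence of traces to strong convergence in $L^G(\partial\Omega)$, and flattening $\partial\Omega$ with the Lipschitz charts reduces the matter to the half-space model.

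The main obstacle I expect is precisely this compactness upgrade in the nonhomogeneous Orlicz framework: unlike the power case $G(t)=t^p$ one cannot interpolate Lebesgue norms, so the whole argument must be phrased modularly. The delicate point is to deduce uniform integrability of $G(|u_k|)$ from the mere $L^{G_*}$ bound, which is where the quantitative form of $G\ll G_*$ (extracted from the divergence in \eqref{cond1}) is indispensable, and then to pass from modular smallness back to norm smallness using the $\Delta_2$ property of both $G$ and $\tilde G$. Once these two conversions are in place the statement follows; in the interest of brevity one may instead simply invoke the Orlicz--Sobolev and trace theorems together with the compactness criterion available in the cited references.
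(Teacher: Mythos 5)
The paper offers no proof of this proposition at all---it is imported verbatim from \cite[Theorem 7.4.6]{FJK} and \cite[Example 6.3]{DSSSS}---and your outline reproduces the standard argument behind those references (a continuous Orlicz--Sobolev and trace inequality into $L^{G_*}$, followed by compactness into any Orlicz target whose Young function increases essentially more slowly than $G_*$, applied to $G$ itself), so it is essentially the same approach and is sound as a sketch. One small correction of attribution: the relation ``$G$ increases essentially more slowly than $G_*$'' is automatic from the very definition of $G_*$, since for $1<t_0<t$ one has $G_*^{-1}(t)\le G_*^{-1}(t_0)+n\,t_0^{-1/n}G^{-1}(t)$, so $G_*^{-1}(t)/G^{-1}(t)\to 0$; the divergence half of \eqref{cond1} is not what produces this, but rather what guarantees that $G_*$ is finite-valued (i.e.\ that one is not in the regime of embedding into $L^\infty$).
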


The following vectorial inequality will be of use for our purposes. See Lemma 3.1 in \cite{CSS}.
\begin{lemma} \label{mono}
Let $G$ be a Young function satisfying that $G'=g$ is a convex function. Then there exists $C=C(p^-)$ such that
$$
\left( g(|a|)\frac{a}{|a|} - g(|b|)\frac{b}{|b|}\right)\cdot (a-b) \geq C G(|a-b|).
$$
\end{lemma}
 
 
\section{The minimization problems and their related eigenvalue problems} \label{sec.autoval}

From now on, we will always assume that $G$ is a Young function  satisfying \eqref{cond} and \eqref{cond1}. Along this section $\alpha>0$ and $E\subset \Omega$ are fixed.


In this section  we state some properties on   $\lam_\Omega(\alpha,E)$. For that end, we introduce the functionals $\mathcal{I},\mathcal{J},\mathcal{K}\colon W^{1,G}(\Omega) \to \R$  are defined as follows
\begin{align*}
\mathcal{I}(v)&:=\int_\Omega G(|\nabla v|) + G(|u|)\,dx + \alpha\int_E G(|v|)\,dx\\
\mathcal{J}(v)&:=\int_\Omega G(|v|)\,dx, \qquad \mathcal{K}(v):=\int_{\partial \Omega} G(|v|)\,d\mathcal{H}^{n-1}
\end{align*}
and whose Frech\'et derivatives are defined from $W^{1,G}(\Omega)$ (in the Neumann or Steklov case, or $W^{1,G}_0(\Omega)$ in the Dirichlet case) into its dual space, and they are given by
\begin{align*}
\langle \mathcal{I}'(u),v \rangle &=\int_\Omega g(|\nabla u|)\tfrac{\nabla u}{|\nabla u|} \cdot \nabla v +  g(|u|) \tfrac{u}{|u|}v\,dx +\int_E \alpha g(|u|) \tfrac{u}{|u|}v\,dx\\
\langle \mathcal{J}'(u),v\rangle&=\int_{\Omega} g(|u|)\tfrac{u}{|u|}v\,dx, \qquad \langle \mathcal{K}'(u),v\rangle=\int_{\partial \Omega} g(|u|)\tfrac{u}{|u|}v\,d\mathcal{H}^{n-1}.
\end{align*}

We say that $\lam$ is an \emph{eigenvalue} of \eqref{eq.d}  (resp. of \eqref{eq.n}) with associated \emph{eigenfunction} $u\in W^{1,G}_0(\Omega)$ (resp. $u\in W^{1,G}(\Omega)$) if
$$
\langle \mathcal{I}'(u),v \rangle  = \lam \langle \mathcal{J}'(u),v \rangle 
$$
for any $v\in W^{1,G}_0(\Omega)$ (resp. $v\in W^{1,G}(\Omega)$).

In the Dirichlet we can neglect the term $\int_\Omega G(|u|)\,dx$ in $\mathcal{I}(u)$ due to the Poincar\'e's inequality.

Similarly, $\lam$ is an eigenvalue of the Steklov equation \eqref{eq.s} with eigenfunction $u\in W^{1,G}(\Omega)$ if
$$
\langle \mathcal{I}'(u),v \rangle  = \lam \langle \mathcal{K}'(u),v \rangle 
$$
for any $v\in W^{1,G}(\Omega)$.

We prove now that the quantities defined in \eqref{lambda.d}, \eqref{lambda.n} and \eqref{lambda.s} are comparable with eigenvalues of \eqref{eq.d}, \eqref{eq.n} and \eqref{eq.s}, and  eigenfunctions of   these eigenproblems minimize \eqref{lambda.d}, \eqref{lambda.n} and \eqref{lambda.s}, respectively.

Observe that $\lam_\Omega(\alpha,E)$ defined in \eqref{lambda.d}, \eqref{lambda.n} and  \eqref{lambda.n} can be written  as
\begin{equation}\label{lambdaD}  
\inf\left\{\frac{\mathcal{I}(v)}{\mathcal{J}(v)}\colon v\in W^{1,G}_0(\Omega), \int_\Omega G(|v|)\,dx=1\right\} \qquad \text{(Dirichlet)}
\end{equation}
\begin{equation}\label{lambdaN}  
\inf\left\{\frac{\mathcal{I}(v)}{\mathcal{J}(v)}\colon v\in W^{1,G}(\Omega), \int_\Omega G(|v|)\,dx=1\right\} \qquad \text{(Neumann)}
\end{equation}
\begin{equation}\label{lambdaS}  
\inf\left\{\frac{\mathcal{I}(v)}{\mathcal{K}(v)}\colon v\in W^{1,G}(\Omega), \int_{\partial\Omega} G(|v|)\,d\mathcal{H}^{n-1}=1\right\} \qquad \text{(Steklov)}.
\end{equation}

The existence and continuity of such  derivatives is guaranteed since $G$ and $\tilde G$ satisfy the $\Delta_2$ condition.

We start by proving that  $\lam_\Omega(\alpha,E)$ is attained by a suitable function.
\begin{prop} \label{prop1}
There exists $u_D\in W^{1,G}_0(\Omega)$ solving \eqref{lambdaD} such that $\int_\Omega G(|u_D|)\,dx=1$. Similarly, there is $u_N\in W^{1,G}(\Omega)$ solving \eqref{lambdaN} such that $\int_\Omega G(|u_N|)\,dx=1$ and $u_S\in W^{1,G}(\Omega)$ solving \eqref{lambdaS} such that $\int_\Omega G(|u_S|)\,d\mathcal{H}^{n-1}=1$.
\end{prop}

\begin{proof}
We start with the Dirichlet case. Let $\{u_k\}_{k \in \N}$ be a minimizing sequence for $\lam_\Omega(\alpha,E)$, that is
\begin{equation} \label{minim}
u_k\in W^{1,G}_0(\Omega), \quad \mathcal{J}(u_k)=1 \quad \text{ and }\quad  \lim_{k\to\infty} \mathcal{I}(u_k) = \lambda_\Omega(\alpha,E).
\end{equation}
Observe that $\{u_k\}_{k\in \N}$ is bounded in $L^G(\Omega)$ and for $k$ big enough, $\int_\Omega G(|\nabla u_k|)\,dx \leq \mathcal{I}(u_k) \leq 1 + \lambda(\alpha,E)$. Therefore, from the reflexivity of $W^{1,G}(\Omega)$ together with the compact embedding stated in Proposition \ref{compact}  we get that there exists a function $u\in W_0^{1,G}(\Omega)$ such that, up to a subsequence
\begin{align*}
&u_k\rightharpoonup u\; \text{ weakly in } W^{1,G}(\Omega),\\
&u_k\to u\;  \text{ strongly  and a.e. in } L^{G}(\Omega).
\end{align*}
The strong convergence in $L^G(\Omega)$ implies that $\mathcal{J}(u)=1$. Hence, by definition
$$
\lambda_\Omega(\alpha,E) \leq \mathcal{I}(u).
$$
On the other hand, since modulars are lower semi-continuous with respect to the weak convergence, by Fatou's Lemma we get
\begin{align*}
\mathcal{I}(u) \leq \liminf_{k\to\infty}\mathcal{I}(u_k)=\lambda_\Omega(\alpha,E).
\end{align*}
From the last two inequalities the lemma follows in the Dirichlet case. The proof for the  remaining cases is analogous.
\end{proof}

We remark that the numbers defined in \eqref{lambdaD}, \eqref{lambdaN} and \eqref{lambdaS} are attained by an eigenfunction corresponding to an eigenvalues to \eqref{eq.d}, \eqref{eq.n} and \eqref{eq.s}, respectively:

\begin{prop} \label{propo}
The minimization problem  $\lam_\Omega(\alpha,E)$ defined in \eqref{lambdaD} is attained by an eigenfunction corresponding to an eigenvalue $\ell_\Omega(\alpha,E)$ of  \eqref{eq.d} with Dirichlet boundary condition. Moreover, 
\begin{equation} \label{relacion}
\frac{p^-}{p^+} \lam_\Omega(\alpha,E) \leq  \ell_\Omega(\alpha,E) \leq \frac{p^+}{p^-} \lam_\Omega(\alpha,E).
\end{equation}
A similar conclusion holds for \eqref{lambdaN} and \eqref{lambdaS}.
\end{prop}

\begin{proof}
We deal with the Dirichlet case, the remaining cases are similar. By Proposition \ref{prop1} there is $u \in W^{1,G}_0(\Omega)$ such that $\mathcal{J}(u)=1$ and $\lam_\Omega(\alpha,E) =\frac{\mathcal{I}(u)}{\mathcal{J}(u)}$. 

By Lagrange's multiplier rule, there exists $c_\Omega(\alpha,E)$ such that
\begin{equation*}
\langle \mathcal{I}'(u),v \rangle = \ell_\Omega(\alpha,E) \langle  \mathcal{J}'(u),v \rangle
\end{equation*}
for all $v \in W_0^{1,G}(\Omega)$, that is, $\ell_\Omega(\alpha,E)$ is an eigenvalue of \eqref{eq.d} with eigenfunction $u$.  Moreover, since for any $t>0$
$$
p^- G(t) \leq g(t)t \leq p^+ G(t)
$$
we get that
$$
\frac{p^-}{p^+} \frac{\mathcal{I}(u)}{\mathcal{J}(u)} \leq \frac{\langle \mathcal{I}'(u),u \rangle}{ \langle  \mathcal{J}'(u),u \rangle}  \leq \frac{p^+}{p^-} \frac{\mathcal{I}(u)}{\mathcal{J}(u)} 
$$
which means \eqref{relacion}.

This concludes the proposition.
\end{proof}

Eigenfunctions are positive in $\Omega$ and H\"older continuous up to the boundary:

\begin{prop} \label{propo}
Let $u\in \mathcal{W}$ solving $\lam_\Omega(\alpha,E)$. Then $u\in C^{1,\gamma}(\Omega)\cap C^\gamma(\bar \Omega)$ and $u>0$ in $\Omega$.
\end{prop}

\begin{proof}
Let $u\in \mathcal{W}$ solving $\lam_\Omega(\alpha,E)$ either in the Dirichlet, Neumann or Steklov case.  Observe that both $u$ and $|u|$ solve the  \eqref{lambdaD}, \eqref{lambdaN} and \eqref{lambdaS}, so, we can assume $u\geq 0$ in $\Omega$. Moreover, by the strong maximum principle (see \cite{Monte})  we have that $u>0$ in $\Omega$. The  regularity estimates of  $u$ follow from \cite{L,Li}.
\end{proof}

\section{An existence result} \label{sec.main1}

This section is devoted to prove the existence of an optimal configuration of \eqref{Lam.d} and to analyze some properties which it fulfills, namely, Theorem \ref{main1}.

\begin{proof}[Proof of Theorem \ref{main1}]
Let us start with the Dirichlet case. Let $\{E_k\}_{k\in\N}$ be a minimizing sequence for $\Lambda:=\Lambda_\Omega(\alpha,c)$, that is, $\lam(E_k):=\lam_\Omega(\alpha,E_k) \to \Lambda$ as $k\to\infty$. Let $u_k\in W^{1,G}_0(\Omega)$ be   a minimizer  to $\lam(E_k)$ normalized such that $\mathcal{J}(u_k)=1$. For $k$ big enough  we have that
\begin{equation} \label{ec.k}
\int_\Omega G(|\nabla  u_k|)+ \alpha \int_{E_k} G(|u_k|)\,dx =\lam(E_k)\leq 1+\Lambda,
\end{equation}
from where it follows that $\{u_k\}_{k\in\N}$ is bounded in $W^{1,G}_0(\Omega)$. Moreover, the sequence $\{\chi_{E_k}\}_{k\in\N}$ is bounded in $L^G(\Omega)$. Then, there exist $u\in W^{1,G}_0(\Omega)$ and $\eta\in L^G(\Omega)$ such that, up to a subsequence,	
\begin{align*}
&u_{k}\rightharpoonup u\; \text{ weakly in } W^{1,G}_0(\Omega),\\
&u_{k}\to u\;  \text{ strongly and a.e. in } L^{G}(\Omega),\\
&\chi_{E_k}\rightharpoonup \eta\;  \text{ weakly in } L^{G}(\Omega).
\end{align*}
From this, $\int_\Omega \eta \,dx =c$ and  $\int_\Omega G(|u|)\,dx=1$. 

Consider the eigenvalue $\ell(E_k)$ given in Proposition \ref{propo}, which has eigenfunction $u_k$. Then
\begin{equation} \label{weak.d}
\int_\Omega g(|\nabla u_k|)\tfrac{\nabla u_k}{|\nabla u_k|} \cdot \nabla v \,dx  =  \int_\Omega (\ell(E_k)-\alpha \chi_{E_k} ) g(|u_k|) \tfrac{u_k}{|u_k|}v\,dx \qquad \forall v\in W^{1,G}_0(\Omega).
\end{equation} 
 
We test with $u_k-u$ in \eqref{weak.d}
\begin{align*} 
\int_\Omega& \left( g(|\nabla u_k|)\tfrac{\nabla u_k}{|\nabla u_k|} - g(|\nabla u|)\tfrac{\nabla u}{|\nabla u|} \right)\cdot (\nabla u_k-\nabla u)  \,dx  =\\
&=  \int_\Omega (\ell(E_k)-\alpha \chi_{E_k} ) g(|u_k|) \tfrac{u_k}{|u_k|} (u_k-u)\,dx - 
\int_\Omega g(|\nabla u|)\tfrac{\nabla u}{|\nabla u|} \cdot (\nabla u_k-\nabla u) \,dx\\
&:=(I)+(II).
\end{align*} 
Observe that $(II)$ goes to $0$ when $k\to\infty$ due to the weak convergence of the gradients; moreover, using Proposition \ref{propo}, Lemma \ref{lemita} and Holder's inequality for Orlicz functions we get 
$$
(I)\leq \frac{p^+}{p^-}\Lambda \| g(u_k)\|_{L^{\tilde G}(\Omega)} \|u_k-u\|_{L^G(\Omega)} \to 0 \quad \text{ as } k\to\infty
$$
since $u_k\to u$ strongly in $L^G(\Omega)$. Therefore, 
$$
\lim_{k\to\infty} \int_\Omega  \left( g(|\nabla u_k|)\tfrac{\nabla u_k}{|\nabla u_k|} - g(|\nabla u|)\tfrac{\nabla u}{|\nabla u|} \right)\cdot (\nabla u_k-\nabla u)  \,dx =0.
$$
From this, by using the vectorial inequality given in Lemma \ref{mono} we get that
$$
\lim_{k\to\infty} \int_\Omega G(|\nabla (u_k -u)|)\,dx=0
$$
and therefore the strong convergence of the gradients in $L^G(\Omega)$:
\begin{equation*}
\lim_{k\to\infty}\int_\Omega G(|\nabla u_k|) \,dx = \int_\Omega G(|\nabla u|)\,dx.
\end{equation*}



Moreover, using that $u_k\to u$ strongly in $L^{G}(\Omega)$ and $\chi_{E_k}\rightharpoonup \eta$ weakly in $L^G(\Omega)$, we get
$$
\int_\Omega \chi_{E_k} G(|u_k|)  \,dx \to \int_\Omega \eta G(|u|) \,dx.
$$

Also, since $0\leq \chi_{E_k}\leq 1$ for all $k$, and weak convergence preserves pointwise inequalities, we have $0\leq \eta\leq 1$ a.e.


Therefore, taking limit as $k\to\infty$ in \eqref{ec.k}, we get
\begin{equation}\label{Lambda}
\int_\Omega G(|\nabla u|) + \alpha \eta G(|u|) \,dx = \Lambda.
\end{equation}

In order to conclude with the proof, let us characterize $\eta$. By Proposition \ref{bathtube}, the problem
\begin{equation}\label{set}
\inf\left\{\int_\Omega \eta G(|u|)\,dx\colon  0\leq\eta\leq 1, \int_\Omega \eta\,dx=c\right\}
\end{equation}
has a solution $\eta= \chi_{E}(x)$, where $E$ is any set with $|E|=c$ and
$$
\{u<t\} \subset E \subset \{u\leq t\}, \quad t=\sup\{s \colon |\{u<s\}|<c\}.
$$
Therefore,
\begin{equation}\label{cara}
\int_\Omega \chi_E G(|u|)\,dx\leq \int_\Omega \eta G(|u|)\,dx.
\end{equation}
Combining \eqref{Lambda} and \eqref{cara}, we obtain that
$$
\int_\Omega G(|\nabla u|) + \alpha \chi_E G(|u|) \,dx \leq \Lambda.
$$
On the other hand, by definition \eqref{Lam.d} of $\Lambda$ we get
$$
\Lambda \leq \lam(E)= \int_\Omega G(|\nabla u|) + \alpha \chi_E G(|u|) \,dx.
$$
This concludes the proof of the existence of an optimal pair in the Dirichlet case.

The proof in the Neumann case is completely analogous just taking the test functions in $W^{1,G}(\Omega)$ instead of $W^{1,G}_0(\Omega)$. For the Steklov problem the proof is similar by using that Proposition \ref{compact} additionally gives that $u_k\to u$ strongly in $L^G(\partial \Omega)$ due to the compact embedding, which allows to conclude that $\mathcal{K}(u)=1$.


{\bf Proof of (a)}. It follows from Proposition \ref{propo}.

{\bf Proof of (b) and (c)}.

Let us consider the Dirichlet case. Let $(u,E)$ be an optimal pair. We claim that 
$$
\{u<t\} \subset E \subset \{u\leq t\}, \quad t=\sup\{s \colon |\{u<s\}|<c\}
$$
always hold up to a set of measure zero. If $E$ does not satisfy the previous condition, then we can reduce $\int_E G(|u|)\,dx$ by shifting a part of $D$ from $\{u>t\}$ to $\{u\leq t\}$.

Finally, set $\mathcal{R}_s=\left \{x\in\Omega\colon u(x)=s \right \}$ for any $s>0$. Since,   $\nabla u=0$ a.e. on $\mathcal{R}_s$, we get  $\Delta_g u=0$ a.e. on $\mathcal{R}_s$. Then, since $u$ is eigenfunction with some eigenvalue $\ell_\Omega$
$$
(\ell_\Omega(\alpha,E)-\alpha\chi_E)g(|u|)  =0 \quad \text{ a.e. on } \mathcal{R}_s.
$$
Since $(\ell_\Omega(\alpha,E)-\alpha\chi_E)g(|u|)> 0$ a.e. on  $\mathcal{R}_s,$ (except possibly when $\ell_\Omega(\alpha,E)=\alpha$), it follows that $|\mathcal{R}_s|=0$. This proves $(c)$ in the Dirichlet case. The Neumann case is totally analogous, and the same conclusion holds except possibly when $\ell_\Omega(\alpha,E)=1+\alpha$. For the Steklov problem, with the same reasoning we get 
$$
(1+\alpha\chi_E)g(|u|)  =0 \quad \text{ a.e. on } \mathcal{R}_s,
$$
from where $|\mathcal{R}_s|=0$. 
In particular, when $s=t$ we get the last assertion in $(b)$.
\end{proof}


In the Dirichlet and Neumann case, the following result establishes the dependence of the parameter dependence of $\Lambda$.
\begin{theorem}
The application $(\alpha,c)\mapsto \Lambda(\alpha,c)$ is Lipschitz continuous, uniformly on bounded sets: for any $0\leq \alpha,\alpha'$ and $0\leq c, c' \leq |\Omega|$ then
$$
|\Lambda(\alpha,c)-\Lambda(\alpha',c')|\leq  |\alpha'-\alpha| + |c'-c| \min\{\alpha,\alpha'\} C, 
$$
where $C$ is a constant depending of $p^+$ and $p^-$.
\end{theorem}

\begin{proof}
Let us consider the Dirchlet case, the Neumann case follows analogously.
By symmetry assume that $c'\geq c$. Denote $\Lambda=\Lambda(\alpha,c)$ and $\Lambda'=\Lambda(\alpha',c')$. Let $(u,E)$ and $(u',E')$ be minimizers of $\Lambda$ and $\Lambda'$, respectively, normalized such that $\int_\Omega G(|u|)\,dx=\int_\Omega G(|u'|)\,dx=1$.

Since $|E|=c\leq c'=|E'|$ we can choose sets $E_1$ and $E_1'$ such that $E_1\subset E'$ and $E\subset E_1'$ with $|E_1|=c$ and $|E_1'|=c'$. We can also assume that $E_1'$ is a sublevel set of the form $\{u\leq s\}$ for a suitable $s$. Since $(u,E)$ is an optimal pair and using that $E_1$ is admissible in the minimization problem for $\Lambda$, and $u'$ is admissible in the characterization of $\lam_\Omega(\alpha,E_1)$ we get 
\begin{align*}
\Lambda &\leq \lambda_\Omega(\alpha,E_1) \leq \int_\Omega G(|\nabla u'|)+ \alpha\int_{E_1} G(|u'|)\,dx\\
&=\Lambda' + (\alpha-\alpha')\int_{E'} G(|  u'|)\,dx-\alpha\int_{E'\setminus E_1} G(|u'|)\,dx.
\end{align*}
With a similar argument we also get
\begin{align*}
\Lambda' &\leq \lambda_\Omega(\alpha',E_1') \leq \int_\Omega G(|\nabla u|) +\alpha'\int_{E_1'} G(|u|)\,dx\\
&=\Lambda + (\alpha'-\alpha)\int_{E'_1} G(|  u|)\,dx + \alpha \int_{E_1'\setminus E} G(|u|)\,dx\\
&=\Lambda + (\alpha'-\alpha)\int_{E} G(|  u|)\,dx + \alpha'\int_{E_1'\setminus E} G(|u|)\,dx.
\end{align*}

From these inequalities we obtain that
\begin{align*}
|\Lambda-\Lambda'|&\leq |\alpha'-\alpha| \max\left\{ \int_{E'} G(|  u'|)\,dx, \int_{E} G(|  u|)\,dx  \right\}\\& +\max\{\alpha,\alpha'\} \max\left\{\int_{E_1'\setminus E} G(|u|)\,dx, \;  \int_{E'\setminus E_1} G(|u'|)\,dx  \right\}\\
&:=(I)+(II).
\end{align*}

In order to bound  observe that
$$
\int_E G(|u|)\,dx \leq \int_\Omega G(|u|)\,dx =1, \qquad \int_{E'} G(|u'|)\,dx \leq \int_\Omega G(|u'|)\,dx =1, 
$$
from where
$$
(I)\leq |\alpha'-\alpha|.
$$

To deal with the second integrals, we notice that
$$
\int_{E_1'\setminus E} G(|u|)\,dx \leq |E_1'\setminus E| G(\|u\|_{L^\infty(\Omega)}), \quad
\int_{E'\setminus E_1} G(|u'|)\,dx \leq |E'\setminus E_1| G(\|u'\|_{L^\infty(\Omega)}).
$$
We observe that both $\Lambda$ and $\Lambda'$ can be bounded uniformly with a constant independent of $\alpha$ and $E$ (resp. $\alpha'$ and $E'$) due to \eqref{cota.indep}. Therefore, by Lemma 3.1 of \cite{L83}, $u$ (in the Dirchlet or Neumann setting) can be bounded as
$$
\|u\|_{L^\infty(\Omega)}\leq C(p^+,p^-, \|u\|_{L^1(\Omega)}), \qquad \|u'\|_{L^\infty(\Omega)}\leq C(p^+,p^-, \|u'\|_{L^1(\Omega)}),
$$
and since $\int_\Omega G(|u|)\,dx=\int_\Omega G(|u'|)\,dx=1$, we get that
$$
\int_{E_1'\setminus E} G(|u|)\,dx \leq (c'-c)  C(p^+,p^-), \quad
\int_{E'\setminus E_1} G(|u'|)\,dx \leq (c'-c) C(p^+,p^-).
$$
Therefore, 
$$
(II)\leq |c'-c|C(p^+,p^-) \max\{\alpha',\alpha\}.
$$
This concludes the proof.

\end{proof}

 
We conclude this section by proving Theorem \ref{main3}.  For that end, we recall some basic results on \emph{spherical symmetrization} of functions on Orlicz-Sobolev spaces.

Given a mensurable set $\Omega\subset\R^n$, the spherical symmetrization $\Omega^*$ of $\Omega$ with respect to an axis given by a unit vector $e_k$ reads as follows: for each positive number $r$, take the intersection $\Omega\cap \partial B(0,r)$ and replace it by the spherical cap of the same $\mathcal{H}^{n-1}-$measure and center $re_k$. Hence, $\Omega^{*}$ is the union of these caps.

Now, the spherical symmetrization $u^*$ of a measurable function $u\colon \Omega \to \R_{+}$ is constructed by symmetrizing the super-level sets so that, for all $t$
$$
   \{u^*\geq t\} = \{u\geq t\}^*.
$$
We refer to \cite{Kawohl}  for more details.

Denoting  by $B_1$ the ball of unit radius centered at the origin, the following properties on symmetrization can be found in \cite[Theorem 4.1]{DSSSS} (see also \cite{Kawohl}).

 \begin{prop}\label{reluu*} Let $u \in W^{1, G}(B_1)$ and $u^{*}$ be its spherical symmetrization.  Then, $u^* \in W^{1, G}(B_1)$. Moreover,
\begin{itemize}
  \item[(i)] $\displaystyle \int_{B_1}G(|u^*|)\,dx =  \int_{B_1}G(|u|)\,dx$,
  \item[(ii)] $\displaystyle \int_{\partial B_1}G(|u^{*}|)\,d\mathcal{H}^{n-1} =  \int_{\partial B_1}G(|u|)\,d\mathcal{H}^{n-1}$,
  \item[(iii)] $\displaystyle \int_{B_1}G(|\nabla u^{*}|)\,dx \leq   \int_{B_1}G(|\nabla u|)\,dx$,
  \item[(iv)] $\displaystyle \int_{B_1} (\alpha \chi_D)_* G(|u^*|)\,dx \leq \int_{B_1}\chi_{D} G(|u|)\,dx$,
\end{itemize}
where $D\subset B_1$ and $(\alpha\chi_D)_* = -(-\alpha\chi_D)^*$.
\end{prop}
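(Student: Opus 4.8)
The plan is to rest everything on the one structural fact behind spherical symmetrization: it is an \emph{equimeasurable} rearrangement that acts sphere by sphere. Concretely, for every $r\in(0,1]$ the restriction $u^*|_{\partial B_r}$ is the one–codimensional cap rearrangement of $u|_{\partial B_r}$ toward the pole $re_k$, so $u^*|_{\partial B_r}$ and $u|_{\partial B_r}$ are equimeasurable with respect to $\mathcal{H}^{n-1}$ on $\partial B_r$; integrating in $r$ in polar coordinates (Fubini) then gives $|\{u>t\}|=|\{u^*>t\}|$ for every $t$, i.e. full equimeasurability of $u$ and $u^*$ on $B_1$. Once this is available, (i) and (ii) are immediate from the layer–cake formula and the monotonicity of $G$: since $|u|$ and $u^*$ are equimeasurable and $G$ is nondecreasing, $G(|u|)$ and $G(u^*)$ are equimeasurable, whence
$$
\Phi_{G,B_1}(u^*)=\int_0^\infty\big|\{G(u^*)>s\}\big|\,ds=\int_0^\infty\big|\{G(|u|)>s\}\big|\,ds=\Phi_{G,B_1}(u),
$$
and the identical computation with $\mathcal{H}^{n-1}$ on the sphere $\partial B_1$ yields (ii). The only delicate point in (ii) is that the trace of $u^*$ on $\partial B_1$ is genuinely the cap rearrangement of the trace of $u$; I would justify this from the sphere–by–sphere definition together with continuity of the trace operator on $W^{1,G}$, letting $r\uparrow1$.

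For (iv) I would again localize to spheres. On each $\partial B_r$ the weight $(\alpha\chi_D)_*|_{\partial B_r}$ is the cap rearrangement of $\alpha\chi_D|_{\partial B_r}$ pushed \emph{away} from the pole, while $G(u^*)|_{\partial B_r}=G(u|_{\partial B_r})^*$ is rearranged \emph{toward} the pole (here I use that $G$ is increasing, so $(G\circ u)^*=G\circ u^*$). Thus the two factors are oppositely arranged on each sphere, and the Hardy–Littlewood rearrangement inequality in its minimizing form (see \cite{LL,Kawohl}) gives
$$
\int_{\partial B_r}(\alpha\chi_D)_*\,G(u^*)\,d\mathcal{H}^{n-1}\le\int_{\partial B_r}\alpha\chi_D\,G(u)\,d\mathcal{H}^{n-1}.
$$
Integrating this over $r\in(0,1)$ produces exactly $\Phi_{G,(\alpha\chi_D)_*,B_1}(u^*)\le\Phi_{G,\alpha\chi_D,B_1}(u)$, which is (iv).

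The genuinely hard part is (iii), the Orlicz form of the Pólya–Szegő inequality for cap symmetrization. My plan would be to first reduce to $u\in C^\infty$ by density of smooth functions in $W^{1,G}$, valid under the $\Delta_2$ condition \eqref{cond} (see \cite{FJK}); then apply the coarea formula to write
$$
\Phi_{G,B_1}(|\nabla u|)=\int_0^\infty\!\!\int_{\{u=t\}}\frac{G(|\nabla u|)}{|\nabla u|}\,d\mathcal{H}^{n-1}\,dt;
$$
and finally decompose $|\nabla u|^2=|\partial_r u|^2+r^{-2}|\nabla_\sigma u|^2$ into its radial and tangential parts. Cap rearrangement preserves the distribution function and, by the isoperimetric inequality on $S^{n-1}$, does not increase the tangential contribution; combining this with Jensen's inequality—using the convexity of $G$ to pass from the pointwise integrand to the rearranged level sets—delivers $\Phi_{G,B_1}(|\nabla u^*|)\le\Phi_{G,B_1}(|\nabla u|)$, and in particular $u^*\in W^{1,G}(B_1)$.

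The main obstacle is precisely this last step: controlling the tangential gradient and carrying out the coarea/Jensen bookkeeping in the non-homogeneous Orlicz setting, where the scaling tricks available for pure powers $G(t)=t^p$ are unavailable and one must argue directly with the convexity of $G$ and the growth bounds of Lemma \ref{lema.prop}. This is exactly the content of \cite[Theorem 4.1]{DSSSS}, from which I would import the gradient inequality (iii), the three remaining items being established by the equimeasurability and rearrangement arguments above.
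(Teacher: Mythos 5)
Your proposal is essentially correct, but note that the paper does not prove this proposition at all: it is stated as a known result, imported verbatim from \cite[Theorem 4.1]{DSSSS} with a pointer to \cite{Kawohl}. So the honest comparison is between your sketch and the standard proofs in those references. Your treatment of (i), (ii) and (iv) is the standard one and is sound: equimeasurability of $u$ and $u^*$ follows from the sphere-by-sphere definition plus polar coordinates, the layer-cake formula with the monotonicity of $G$ gives (i) and (ii), and the ``oppositely arranged'' (minimizing) form of the Hardy--Littlewood inequality on each sphere $\partial B_r$, integrated in $r$, gives (iv); you correctly identify that $(\alpha\chi_D)_*$ concentrates the weight on anti-caps while $G(u^*)$ concentrates on caps, which is exactly the configuration the inequality requires. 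You also rightly flag the one genuinely delicate point in (ii), namely that the trace of $u^*$ on $\partial B_1$ is the cap rearrangement of the trace of $u$.

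The only place where your sketch is weaker than advertised is (iii). The coarea-plus-isoperimetry argument you outline is the classical route for \emph{Schwarz} symmetrization, where the rearranged level sets are balls; for spherical (cap) symmetrization the level sets $\{u^*>t\}$ are unions of caps and are not extremal for the full isoperimetric inequality in $B_1$, so that bookkeeping does not transfer directly. The proofs in the literature (Baernstein--Taylor, Brock--Solynin, and \cite[Theorem 4.1]{DSSSS} in the Orlicz setting) instead go through polarization (two-point rearrangement), under which $\int G(|\nabla u|)$ is exactly preserved at each step, followed by a limiting argument. Since you explicitly import (iii) from \cite[Theorem 4.1]{DSSSS} --- which is precisely what the paper does for all four items --- this is not a gap in your argument, but the heuristic you give for how (iii) would be proved from scratch is not the one that actually works.
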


We are now in position to prove our symmetrization result for optimal pairs.

\begin{proof}[Proof of Theorem \ref{main3}]
Given  $\alpha>0$ and $c\in (0,|B_1|)$, let $(u,E)$ be an optimal pair and let $u^*$ be the spherical symmetrization of $u$. Consider the set $E^*$ defined by $\chi_{E^*}=(\chi_E)_*$.

In the Dirichlet or Neumann case, from item (i) in Proposition \ref{reluu*}, $\mathcal{J}(u^*)=1$ since $\mathcal{J}(u)=1$. Then, using Proposition \ref{reluu*} again we get
$$
\lambda_\Omega(\alpha,E^*) \leq \mathcal{I}(u^*) \leq \mathcal{I}(u) = \lambda_\Omega(\alpha,E).
$$
The same assertion holds in the Steklov case since from item (ii) in  Proposition \ref{reluu*}, $\mathcal{K}(u^*)=1$.

Finally, since $|E^*|=c=|E|$, $(u^*,E^*)$ is also an optimal pair.
\end{proof}

\section{Limit as $\alpha\to\infty$} \label{sec.main2}

Fixed a subset $E\subset\Omega\subset \R^n$ and $\alpha\in \R$, in our first result we have focused on the study of the optimization problem
$$
\Lambda_\Omega(\alpha,c):= \inf\{  \lambda_\Omega(\alpha,E)\colon E\subset \Omega, |E|=c \}
$$
where $\lambda_\Omega(\alpha,E)$ is related to the Dirichlet, Neumann or Steklov eigenvalue problem.

In this section, fixed the value of $c$ we analyze the behavior of $\Lambda_\Omega(\alpha,c)$ as $\alpha\to\infty$. We recall that the corresponding limit problem is defined as
\begin{align*}
\Lambda_\Omega(\infty,c):=\inf\left\{  \lambda_\Omega(\infty,E)\colon E\subset \Omega, |E|=c \right\}
\end{align*}
where
$$
\lam_\Omega(\infty,E)=\inf_{v\in \mathcal{W}(\Omega), v|_E\equiv 0 } \int_\Omega G(|\nabla v|)\,dx  .
$$

Now we are in position to prove our second main result.

\begin{proof}[Proof of Theorem \ref{main2}]
We start with the proof in the Dirichlet case. Let $\alpha,c >0$ be fixed and let $(u_\alpha,E_\alpha)$ be an optimal pair related to
$$
\Lambda_\Omega(\alpha,c)=\inf\{\lambda_\Omega(\alpha,E)\colon E\subset\Omega, |E|=c\}, 
$$
i.e., $E_\alpha$ solves $\Lambda_\Omega(\alpha,c)$ and $u_\alpha$ is the minimizer corresponding to $\lam_\Omega(\alpha,E)$.

Let $u_0 \in W^{1,G}_0(\Omega)$ and $E_0\subset \Omega$ be such that $\left| E_0 \right|=c$ and
$u_0 \chi_{E_0}=0$.
Then, we have that
\begin{align} \label{cota.indep}
\begin{split}
\Lambda_\Omega(\alpha,c)&\leq 
\frac{\int_\Omega G(|\nabla u_0|)\,dx + \alpha\int_{E_0} G(|u_0|)\,dx}{\int_{\Omega}G(|u_0|)\,dx}\\
&=
\frac{\int_\Omega G(|\nabla u_0|)\,dx }{\int_{\Omega}G(|u_0|)\,dx} :=C
\end{split}
\end{align}
with $C$ independent of $\alpha$. Thus ${\Lambda_\Omega(\alpha,c)}$ is a bounded sequence in $\R$. Moreover, since for each fixed $E$, $\lambda_\Omega(\alpha,E)$ is increasing in $\alpha$, then it is clear that ${\Lambda_\Omega(\alpha,c)}$ is also increasing in $\alpha$.

Consequently, $\{u_\alpha\}_{\alpha>0}$ is bounded in $W^{1,G}(\Omega)$. Moreover, the sequence $\{\chi_{E_\alpha}\}_{\alpha>0}$ is bounded in $L^\infty(\Omega)$. Therefore by Proposition \ref{compact}, up to a subsequence, there exist $u_\infty\in W^{1,G}(\Omega)$ and   $\eta_\infty \in L^\infty (\Omega)$ such that
\begin{align*}
&u_{\alpha_k}\rightharpoonup u_\infty \; \text{ weakly in } W^{1,G}_0(\Omega),\\
&u_{\alpha_k}\to u_\infty \; \text{ a.e. in $\Omega$ and strongly in } L^{G}(\Omega),\\
&\chi_{E_{\alpha_k}}\rightharpoonup \eta_\infty \; \text{ weakly*  in }L^{\infty }(\Omega).
\end{align*} 
Therefore, $\int_\Omega G(|u_{\infty}|)\,dx=1$ and 
$$
\int_\Omega \chi_{E_{\alpha_k}} G(|u_{\alpha_k}|)\,dx \to \int_\Omega \eta_\infty G(|u_\infty|)\,dx,
$$
Moreover, since $0\leq \chi_{E_{\alpha_k}}\leq 1$ for all $k$, and weak convergence preserves pointwise inequalities, we have $0\leq \eta_\infty\leq 1$ a.e. Also
$$
0\leq \alpha_k \int_\Omega \chi_{E_{\alpha_k}} G(|u_{\alpha_k}|)\,dx \leq  \Lambda_\Omega(\alpha_k,c) \leq C
$$
which implies that
$$
0\leq \int_\Omega \chi_{E_{\alpha_k}} G(|u_{\alpha_k}|)\,dx \leq  \frac{C}{\alpha_k }.
$$
Taking limit as $\alpha_k\to\infty$ gives that
$$
\int_\Omega \eta_\infty G(|u_\infty|)\,dx=0.
$$
On the other hand, by the Proposition \ref{bathtube} there exists $E_\infty\subset \Omega$ with $|E_\infty|=c$ such that
$$
\int_\Omega \chi_{E_{\infty}} G(|u_\infty|)\,dx \leq \int_\Omega \eta_\infty G(|u_\infty|)\,dx
$$
and 
$$
\int_\Omega \chi_{E_\infty} G(|u_\infty|)\,dx=0 \implies \chi_{E_\infty} u_\infty=0 \text{ a.e. in } \Omega.
$$
Since $\{\Lambda_\Omega(\alpha,c)\}_{\alpha}$ is bounded uniformly and increasing in $\alpha$, there exists its limit
$$
\lim_{k\to\infty} \Lambda_\Omega(\alpha_k,c):=\Lambda_\Omega(\infty,c)
$$
and
\begin{align*}
\Lambda_\Omega(\infty,c) &= \lim_{k\to\infty} \int_\Omega G(|\nabla u_{\alpha_k}|) + \alpha_k \int_\Omega \chi_{E_{\alpha_k}} G(|u_{\alpha_k}|)\,dx\\
&\geq
\liminf_{k\to\infty} \int_\Omega G(|\nabla u_{\alpha_k}|)\,dx\\
&\geq \int_\Omega G(|\nabla u_\infty|)\,dx\\
&\geq\lambda_\Omega(\infty,E_\infty)\geq\Lambda_\Omega(\infty,c)\\
\end{align*}
where we have used the lower semicontinuity of the modular with respect to the weak convergence and the fact that $u_\infty \chi_{E_\infty}=0$ with $E_\infty\subset \Omega$ and $|E_\infty|=c$.

Therefore, 
\begin{align*}
\Lambda_\Omega(\infty,c) 
&=  \int_\Omega G(|\nabla u_\infty|) \,dx.
\end{align*}
This concludes the proof in the Dirichlet case. The Neumann case is analogous just taking the test functions in $W^{1,G}(\Omega)$ instead of $W^{1,G}_0(\Omega)$. For the Steklov problem the proof is similar by using that    Proposition \ref{compact} additionally gives that $u_k\to u$ strongly in $L^G(\partial \Omega)$ due to the compact embedding, which allows to conclude that $\mathcal{K}(u)=1$.
\end{proof}	

%
%

\section*{Acknowledgments}
AS and AS are members of CONICET, Argentina. BS is a fellow of CONICET. AS and BS are partially supported by ANPCyT PICT 2017-0704 and 2019-03837 and by UNSL under grants PROIPRO 03-2420 and PROICO 03-0720. AS is partially supported by ANPCyT PICT 2017-1119.


\begin{thebibliography}{99}

\bibitem{All} G. Allaire,  \emph{Shape Optimization by the Homogenization Method}, Appl. Math. Sci. 146, Springer, New York, (2002).

\bibitem{Brezis}H. Brezis, \emph{Functional analysis, Sobolev spaces and Partial Differential equations}, Springer, (2011).

\bibitem{BB}
D. Bucur and G. Buttazzo, \emph{Variational Methods in Shape Optimization Problems}, Progr. Nonlinear Differential Equations
Appl. 65, Birkhäuser, Boston, (2005).


\bibitem{CSS}
N. Cantizano, A. Salort and  J. Spedaletti,	\emph{Continuity of solutions for the $\Delta_{\phi}$-Laplacian operator.} 	Proc. Roy. Soc. Edinburgh Sect. A	151, no. 4, (2021), 1355--1382.

\bibitem{CGK}
S. Chanillo, D. Grieser, and K. Kurata, \emph{The free boundary problem in the optimization of composite membranes}, Differential geometric methods in the control of partial differential equations (Boulder, CO, 1999),  Contemp. Math., 268, Amer. Math. Soc., Providence, RI, (2000), 61–81.

\bibitem{CGIKO}
S. Chanillo, D. Grieser, M. Imai, K. Kurata, and I. Ohnishi. \emph{Symmetry breaking and other phenomena in the optimization of eigenvalues for composite membranes}. Communications in Mathematical Physics, 214(2), (2000), 315-337.





\bibitem{CC}
A. Cherkaev and  E. Cherkaeva, \emph{Optimal design for uncertain loading condition. Homogenization}, 193–213, Ser. Adv. Math. Appl. Sci., 50, World Sci. Publishing, River Edge, NJ, (1999).


\bibitem{CEP}
F. Cuccu, B.  Emamizadeh and  G. Porru,   \emph{Optimization of the first eigenvalue in problems involving the $p$-Laplacian.} Proceedings of the American Mathematical Society, 137(5), (2009), 1677-1687.


\bibitem{DG}
A. Dalibard D.  G\'erard-Varet,  \emph{On shape optimization problems involving the fractional Laplacian.} ESAIM: Control, Optimisation and Calculus of Variations, 19(4), (2013), 976-1013.

\bibitem{DSSSS}
J.V. Da Silva, A. Salort, A. Silva  and J. Spedaletti, \emph{A constrained shape optimization problem in Orlicz-Sobolev spaces}. Journal of Differential Equations, 267(9),  (2019), 5493-5520.

\bibitem{DHHR}
L. Diening, P. Harjulehto, P. H\"ast\"o and M. Ruzicka, \emph{Lebesgue and Sobolev spaces with variable exponents}. Springer. (2011).

\bibitem{DPFB}
L. Del Pezzo, and J.  Fern\'andez Bonder,  \emph{Some optimization problems for p-Laplacian type equations.} Applied Mathematics and Optimization, 59(3), 365, (2009).

\bibitem{DPFB2}
L. Del Pezzo  and J. Fern\'andez Bonder,  \emph{An optimization problem for the first eigenvalue of the p-Laplacian plus a potential.} Communications on Pure and Applied Analysis, 5(4), (2006), 675-690.



\bibitem{DPFBR} L. Del Pezzo, J. Fern\'andez Bonder  and J. Rossi,  \emph{An optimization problem for the first Steklov eigenvalue of a nonlinear problem}.  Differential and Integral Equations, 19(9), (2006), 1035-1046.

\bibitem{EF}
B. Emamizadeh, and R. Fernandes,    \emph{Optimization of the principal eigenvalue of the one-dimensional Schrödinger operator.} Electronic Journal of Differential Equations (EJDE)Vol. 2008, No. 65, (2008), 1–11.

\bibitem{EL}
B. Emamizadeh and Y.  Liu,  \emph{Constrained and unconstrained rearrangement minimization problems related to the $p$-Laplace operator.} Israel Journal of Mathematics, 206(1), (2015), 281-298.

\bibitem{FZ}
X. Fan and D. Zhao, \emph{A class of De Giorgi type and Hölder continuity}. Nonlinear Analysis: Theory, Methods \& Applications, 36(3), (1999), 295-318.

\bibitem{FBRS}
J. Fern\'andez Bonder, A. Ritorto and A. Salort,  \emph{A class of shape optimization problems for some nonlocal operators.} Advances in Calculus of Variations, 11(4), (2018),9373-386.

\bibitem{BoS} J. Fern\'andez Bonder  and A. Salort  \emph{ Fractional order Orlicz-Sobolev spaces}. Journal of Functional Analysis, 277, (2019), 333-367.

\bibitem{He}
A. Henrot, \emph{Extremum Problems for Eigenvalues of Elliptic Operators}, Front. Math., Birkhäuser, Basel, (2006).

\bibitem{FJK}
A. Kufner, O. John and  S. Fu\v{c}\'{\i}k,  \emph{Function spaces} (Vol. 3). Springer Science \& Business Media, (1977).

\bibitem{Kawohl} B. Kawohl   \textit{Rearrangements and Convexity of Level Sets in PDE}. Lect. Notes in Math., vol. 1150, Springer-Verlag, Berlin, (1985).\label{Kawohl}

\bibitem{L}
G. Liberman, \emph{Boundary regularity for solutions of degenerate elliptic equations.} Nonlinear Analysis: Theory, Methods \& Applications,
Volume 12, Issue 11, (1988), 1203-1219.

\bibitem{L83}
G. Lieberman, \emph{The conormal derivative problem for elliptic equations of variational type}. Journal of differential equations, (1983), 49(2), 218-257.
 
\bibitem{Li} 
G. Lieberman,  \emph{The natural generalization of the natural conditions of {L}adyzhenskaya and {U}ral$\prime$tseva for elliptic equations}. Comm. Partial Diff. Eq., v. 16, (1991), n. 2-3, pp. 311--361. 

\bibitem{LL}
H. Lieb, M. Loss, \emph{ Analysis: Second Edition}, Graduate Studies in Mathematics, vol. 14, AMS,  (2001).

\bibitem{M}
M. Marras, \emph{Optimization in problems involving the $p$-Laplacian.} Electronic Journal of Differential Equations, Vol. 2010, No. 02, (2010), 1–10.

\bibitem{SM}
S. Mart\'inez,   \emph{An optimization problem with volume constraint in Orlicz spaces}. Journal of Mathematical Analysis and Applications, 340(2), (2008), 1407-1421.

\bibitem{MW}
S. Mart\'inez and N. Wolanski, \emph{A minimum problem with free boundary in Orlicz spaces}. Advances in Mathematics, 218(6), (2008), 1914-1971.

\bibitem{Monte}
M. Montenegro, Strong maximum principles for supersolutions of quasilinear elliptic equations. Nonlinear Analysis: Theory, Methods \& Applications, 37(4), (1999), 431-448.

\bibitem{PS}
E. Parini and A. Salort, \emph{Compactness and dichotomy in nonlocal shape optimization.} Mathematische Nachrichten. 293, (2020), 2208– 2232.

\bibitem{Piro}
O. Pironneau,  \emph{Optimal Shape Design for Elliptic Systems}, Springer Ser. Comput. Phys., Springer, New York, (1984).

\bibitem{SLO}
A. Seyranian, E. Lund and N. Olhoff, \emph{Multiple eigenvalues in structural optimization
problems}, J. Structural Optimization, 8, (1994), 207–227.

\bibitem{SZ}
J. Sokolowski and J.P.  Zol\'esio, \emph{Introduction to Shape Optimization: Shape Sensitivity Analysis}, Springer Ser. Comput.
Math. 16, Springer, Berlin, (1992).



 \bibitem{ZZZ}J. Zheng, Z. Zhang and  P. Zhao,  \emph{A minimum problem with two-phase free boundary in Orlicz spaces.} Monatshefte für Mathematik, 172(3-4), (2013), 441-475.
\end{thebibliography}
\end{document}